\newtheorem{thm}{Theorem}
\newtheorem{prop}{Proposition}
\newtheorem{Def}{Definition}
\newtheorem{lem}{Lemma}
\newtheorem{rem}{Remark}
\begin{document}
	\title[On the Commutativity of Toeplitz Operators]{On the Commutativity of a certain class of Toeplitz operators}
	
	\date{\today}

\author{Hashem AlSabi and Issam Louhichi}
\address{American University of Sharjah\\
	Department of Mathematics \& Statistics\\
	P.O.Box 26666, Sharjah, UAE.}
\email{b00054409@aus.edu}
\email{ilouhichi@aus.edu}

\subjclass[2010]{Primary 47B35; Secondary 47B38}
\keywords{Toeplitz operator, Quasihomogeneous symbol, Mellin transform}

\begin{abstract}
 In this paper we prove that if the polar decomposition of a symbol $f$ is truncated above, i.e., $f(re^{i\theta} )=\sum_{k=-\infty}^Ne^{ik\theta} f_k (r)$  where the $f_k$'s are radial functions, and if the associated Toeplitz operator $T_f$ commutes with  $T_{z^2+\bar{z}^2}$, then $T_f=Q(T_{z^2+\bar{z}^2})$ where $Q$ is a polynomial of degree at most $1$. This gives a partial answer to an open problem by S. Axler, \u{Z}. \u{C}u\u{c}kovi\'{c} and N. V. Rao \cite[p.~1953]{acr}.
\end{abstract}
\maketitle{}

\section{Introduction}

Let $\mathbb{D}$ be the unit disk of the complex plane $\mathbb{C}$, and $dA=rdr\frac{d\theta}{\pi}$, where $(r,\theta)$ are polar coordinates, be the normalized Lebesgue measure, so that the area of $\mathbb{D}$ is one.  We define the analytic Bergman space, denoted $L^2_a(\mathbb{D})$, to be the set of all analytic functions on $\mathbb{D}$ that are square integrable with respect to the measure $dA$. It is well know that $L^2_a(\mathbb{D})$ is a closed subspace of the Hilbert space $L^2(\mathbb{D},dA)$ and has the set $\{\sqrt{n+1}z^n\ |\ n\geq 0\}$ as an orthonormal basis (see \cite{HH}). Thus, $L^2_a(\mathbb{D})$ is itself a Hilbert space with the usual inner product of $L^2(\mathbb{D},dA)$. Moreover the orthogonal projection, denoted $P$, from $L^2(\mathbb{D},dA)$ onto $L^2_a(\mathbb{D})$, often called the Bergman projection, is well defined. Let $f$ be a bounded function on $\mathbb{D}$. We define on $L_a^2(\mathbb{D})$ the Toeplitz operator $T_f$ with symbol $f$ by $T_f(u)=P(fu),$ for any $u\in L^2_a(\mathbb{D})$.

A natural question to ask is under which conditions is the product (in a sense of composition) of two Toeplitz operators commutative? In other words, when is $T_fT_g=T_gT_f$ for given two Toeplitz operators $T_f$ and $T_g$? It is easy to see from the definition of Toeplitz operators that if the symbol $f$ is analytic and bounded on $\mathbb{D}$, then $T_f$ is simply the multiplication operator by $f$, i.e., $T_f(u)=fu$ for all $u\in L_a^2(\mathbb{D})$. Thus, any two analytic Toeplitz operators (i.e., Toeplitz operators with analytic symbols) commute with each other. Again from the definition of Toeplitz operators, we have that the adjoint of $T_f$ is $T_{\bar{f}}$ where $\bar{f}$ is the complex conjugate of $f$. It follows that if $f$ is antianalytic (i.e., $\bar{f}$ is analytic), then $T^*$ is the multiplication operator by $\bar{f}$. Hence, if two symbols $f$, and $g$ are antianalytic, then their associated Toeplitz operators commute since their adjoints commute. This situation in which the symbols are both analytic (resp. antianalytic) is known to us as the trivial situation. One might ask what if the symbols were harmonic but not necessarily analytic or antianalytic. The answer to this question was given by S. Axler and \u{Z}. \u{C}u\u{c}kovi\'{c} in \cite{ac}. They proved the following:
\begin{thm}[Axler \& \u{C}u\u{c}kovi\'{c}]
If $f$ and $g$ are two bounded harmonic functions in $\mathbb{D}$, then $T_fT_g=T_gT_f$ if and only if
\begin{itemize}
	\item[(a)] both $f$ and $g$ are analytic in $\mathbb{D}$,
	or
	\item[(b)] both $f$ and $g$ are antianalytic in $\mathbb{D}$,
	or
	\item[(c)] $f=\alpha g+\beta$, where $\alpha, \beta$ are constant in $\mathbb{C}$.
\end{itemize}
\end{thm}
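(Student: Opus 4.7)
The implications (a), (b), (c) $\Rightarrow$ commutativity are immediate from the observations in the introduction: analytic Toeplitz operators are multiplications by analytic functions (which mutually commute), antianalytic ones are their adjoints (so again commute), and (c) gives $T_f = \alpha T_g + \beta I$, which trivially commutes with $T_g$. The substantive content is the converse, which I would attack as follows.

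Decompose $f = f_1 + \overline{f_2}$ and $g = g_1 + \overline{g_2}$ into analytic-plus-antianalytic parts. Since $T_{f_1}T_{g_1} = T_{f_1 g_1}$ and $T_{\overline{f_2}}T_{\overline{g_2}} = T_{\overline{f_2 g_2}}$, the pairs $(T_{f_1}, T_{g_1})$ and $(T_{\overline{f_2}}, T_{\overline{g_2}})$ already commute, so the hypothesis $[T_f, T_g] = 0$ collapses to the single identity
\begin{equation*}
[T_{f_1}, T_{\overline{g_2}}] \;=\; [T_{g_1}, T_{\overline{f_2}}].
\end{equation*}
I would then expand $f_1 = \sum a_j z^j$, $f_2 = \sum b_j z^j$, $g_1 = \sum c_j z^j$, $g_2 = \sum d_j z^j$, apply both sides to $z^n$, and compute using the Bergman projection formula $P(z^j \bar z^k) = \tfrac{j-k+1}{j+1}\, z^{j-k}$ for $j \geq k$ (and $0$ otherwise). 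Matching the coefficient of each $z^m$ on the two sides produces an infinite doubly indexed family of bilinear identities in the sequences $(a_j), (b_j), (c_j), (d_j)$.

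The strategy is then to extract the trichotomy by case analysis on this system. If $f_2$ is constant, then $f$ is analytic, and the remaining identities should force $g$ to be analytic as well (case (a)) unless $f$ is itself constant, which is a degenerate instance of (c); the subcase where $g_2$ is constant is symmetric. In the remaining case where both $f_2$ and $g_2$ are nonconstant, an inductive argument on the indices of the lowest-order terms should yield a single scalar $\alpha \in \mathbb{C}$ with $a_j = \alpha c_j$ and $b_j = \alpha d_j$ for every $j \geq 1$, whence $f - \alpha g$ is constant and (c) holds.

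The main obstacle is this last step: the rational weights $(j-k+1)/(j+1)$ couple coefficients at different orders, so isolating a uniform proportionality constant across the whole sequence will require either a careful two-parameter induction propagating the ratio from the lowest nonzero index outward, or a clever choice of test functions that decouples the system. Ruling out spurious solutions arising from partial vanishings of the coefficient sequences is the delicate combinatorial core of the argument.
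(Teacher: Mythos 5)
Your sufficiency paragraph is fine, but it is also all that this paper contains: Theorem 1 is quoted from \cite{ac} without proof, and the remarks following its statement are exactly the observations you repeat. So the substance of your proposal is the converse direction, and there the proposal has a genuine gap rather than an alternative proof.

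What you give for the converse is a plan whose decisive step is missing. The reduction of $T_fT_g=T_gT_f$ to $[T_{f_1},T_{\overline{g_2}}]=[T_{g_1},T_{\overline{f_2}}]$ and the projection formula $P(z^j\bar z^k)=\frac{j-k+1}{j+1}z^{j-k}$ for $j\geq k$ are correct (with the standard caveat that the analytic parts of a bounded harmonic function need not be bounded, so these operators are a priori only densely defined, e.g.\ on polynomials). But every branch of your case analysis ends in ``should force'': the branch ``$f$ analytic and nonconstant forces $g$ analytic'' is Theorem 2 of \cite{acr}, itself a nontrivial result that this paper also only cites, and the main branch --- extracting a single constant $\alpha$ with $a_j=\alpha c_j$ and $b_j=\alpha d_j$ from the doubly indexed bilinear system --- is precisely the content of the theorem, for which you supply no mechanism and which you yourself flag as an unresolved obstacle. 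The weights $(j-k+1)/(j+1)$ couple all degrees, and nothing in what is written rules out partial vanishing (lacunary) coefficient patterns, so there is no induction to run as stated. The idea that actually decouples the system in Axler and \u{C}u\u{c}kovi\'{c}'s proof is to test the commutator not on monomials but on the normalized reproducing kernels $k_w$: since $T_{\bar h}k_w=\overline{h(w)}\,k_w$ for analytic $h$, the identity $[T_{f_1},T_{\overline{g_2}}]=[T_{g_1},T_{\overline{f_2}}]$ becomes the statement that $u=g_1\overline{f_2}-f_1\overline{g_2}$ is fixed by the Berezin transform; their key lemma then forces $u$ to be harmonic, i.e.\ $g_1'\,\overline{f_2'}=f_1'\,\overline{g_2'}$ pointwise on $\mathbb{D}$, and the trichotomy follows by elementary function theory (on the nondegenerate branch $f_1'/g_1'=\overline{f_2'/g_2'}$ is simultaneously analytic and antianalytic, hence a constant $\alpha$, so $f-\alpha g$ is constant). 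Either supply an argument of that strength for your coefficient system or switch to the kernel/Berezin-transform computation; as written, the converse is not proved.
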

So basically if both symbols are harmonic, then the product is commutative only in the trivial case. In fact, the sufficient condition (a) (resp. (b)) says that the operators $T_f$ and $T_g$ (resp. their adjoints $T_{\bar{f}}$ and $T_{\bar{g}}$) are multiplication operators and so they commute. For the sufficient condition (c), since Toeplitz operators are linear with respect to their symbol, we can write $T_f=\alpha T_g+\beta I$ where $I=T_{1}$ is the identity operator on $L^{2}_a(\mathbb{D})$, and hence, since $T_g$ commutes with itself and with the identity, $T_g$ commutes with  $T_f$.

The next natural step was to relax the hypothesis of the previous theorem in order to obtain results for a larger class of symbols. In \cite{acr}, S. Axler, \u{Z}. \u{C}u\u{c}kovi\'{c}, and N. V. Rao proved that analytic Toeplitz operators commute only with other such operators. Their result can be stated as follows:
\begin{thm}[Axler, \u{C}u\u{c}kovi\'{c} \& Rao]
If $g$ is a nonconstant analytic function in $\mathbb{D}$ and if $f$ is bounded in $\mathbb{D}$ such that $T_fT_g=T_gT_f$, then $f$ must be analytic too.	
\end{thm}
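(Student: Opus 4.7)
Since $g$ is bounded analytic, $T_g$ is simply multiplication by $g$ on $L^2_a(\mathbb{D})$, and $T_fT_g=T_{fg}$ holds automatically. Thus the commutation $T_fT_g=T_gT_f$ is equivalent to the intertwining relation
\[
P(fgu)=gP(fu), \qquad u\in L^2_a(\mathbb{D}).
\]
My goal will be to deduce from this that $P(fz^n)=z^nP(f)$ for every $n\ge 0$; once this is in hand, a standard density argument using the closed linear span of $\{\bar z^nz^m\}$ in $L^2(\mathbb{D},dA)$ shows $f=P(f)$, which is exactly analyticity of $f$.

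The natural tool for extracting that family of identities is a polar/Mellin decomposition. Write $f(re^{i\theta})=\sum_{k\in\mathbb{Z}}e^{ik\theta}f_k(r)$ (boundedness of $f$ makes this an $L^2$-series on each circle) and set $M_k(s):=2\int_0^1 f_k(r)r^{s+1}\,dr$. A direct computation of inner products gives
\[
P(fz^n)=\sum_{\ell\ge 0}(\ell+1)\,M_{\ell-n}(n+\ell)\,z^\ell,
\]
so the Taylor coefficients of the iterates $P(fz^n)$ are precisely samples of the bounded holomorphic functions $s\mapsto M_k(s)$ on the right half-plane $\operatorname{Re}(s)>-2$. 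Expanding $g=\sum_{m\ge 0}b_mz^m$ and testing the intertwining on $u=z^n$ then converts the operator identity into the infinite linear system
\[
\sum_{m\ge 0} b_m\,P(fz^{n+m})\;=\;g\,P(fz^n),
\]
which, upon matching coefficients of $z^\ell$, yields a family of linear functional equations for the $M_k$.

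The first analytic target is to show $M_k\equiv 0$ for every $k<0$, that is, that the anti-analytic part of $f$ vanishes. Since constants commute with everything, I may subtract $g(0)$ and assume $g(0)=0$; let $p\ge 1$ be the order of vanishing of $g$ at $0$ and $b_p\ne 0$ its leading coefficient. Examining the matched equations for small $\ell$ (specifically $\ell<p$, and then peeling off the $b_p$-term by increasing $\ell$) decouples the system and produces the vanishing of each $M_k$ ($k<0$) along an arithmetic progression of points in the right half-plane. The classical Mellin uniqueness principle---a bounded holomorphic function on a right half-plane vanishing at a set $\{s_j\}$ with $\sum 1/s_j=\infty$ is identically zero---then forces $M_k\equiv 0$, hence $f_k\equiv 0$ for $k<0$.

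With the anti-analytic part gone, the remaining equations (for $k\ge 0$) say that $(s+k+2)M_k(s)$ takes a common constant value on an arithmetic progression, so a second application of the same uniqueness principle gives $M_k(s)=2C_k/(s+k+2)$, which is exactly the Mellin transform of $C_kr^k$. Therefore $f_k(r)=C_kr^k$ and $f(re^{i\theta})=\sum_{k\ge 0}C_kz^k$ is analytic. I expect the main obstacle to be the combinatorial bookkeeping in the decoupling step: for a monomial $g=z^N$ the relations separate cleanly by Fourier index, but for a general nonconstant $g$ the coefficients $b_m$ couple different $M_k$'s, and extracting one equation per $k$ requires systematically using the lowest nonzero power $p$ and comparing coefficients at carefully chosen $(n,\ell)$. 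Once this decoupling is carried out, the Mellin uniqueness theorem finishes the proof.
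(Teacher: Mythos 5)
Your plan is not a proof; the step you dismiss as ``combinatorial bookkeeping'' is in fact the entire content of the theorem, and nothing in the proposal indicates how it could be carried out. To see the difficulty concretely, normalize $g(0)=0$, let $p$ be the order of vanishing and $b_p\neq 0$, and match coefficients of $z^\ell$ in $\sum_m b_m P(fz^{n+m})=gP(fz^n)$ as you propose. For $0\le\ell<p$ the right-hand side contributes nothing and, after substituting $k=\ell-n-m$, the relation reads
\begin{equation*}
\sum_{k\le \ell-n-p} b_{\ell-n-k}\,M_k(2\ell-k)\;=\;0 ,
\end{equation*}
a single linear relation among infinitely many unknown values, one from every Fourier index $k$ below $\ell-n-p$ (for polynomial $g$ the sum is finite but still a sliding window of length $\deg g-p+1$, never a single term unless $g$ is a monomial). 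There is no ``first'' unknown from which to peel: the index set is unbounded below, so an induction has no base case, and isolating the $b_p$-term only expresses the top value through an uncontrolled tail. Worse, for a fixed $k<0$ these decoupled-looking relations only ever involve the $p$ values $M_k(2\ell-k)$, $0\le\ell<p$; to reach a sequence of zeros with divergent reciprocal sums (as Lemma~2 requires) you are forced into the equations with $\ell\ge p$, whose right-hand sides reintroduce all the $M_j$ at shifted arguments, including the nonnegative indices $j$ which must \emph{not} vanish since $f$ may be any analytic function. So the system is fully coupled, and it is only for $g=z^N$ that each matched equation contains exactly one term and your argument closes (that monomial case is indeed known, by exactly this Mellin method, due to \u{C}u\u{c}kovi\'{c} and Rao). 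Your remaining ingredients are fine --- the formula $P(fz^n)=\sum_\ell(\ell+1)M_{\ell-n}(n+\ell)z^\ell$, the density argument showing that $f_k=0$ for $k<0$ together with $f_k(r)=C_kr^k$ for $k\ge0$ forces $f$ analytic, and (modulo the routine upgrade from equality on a progression to periodicity, which needs boundedness of $(s+k+2)M_k(s)$ and Lemmas~2 and~3) the second phase --- but they all sit downstream of the missing decoupling.

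Note also that the paper you are commenting on does not prove this statement: Theorem~2 is quoted from \cite{acr}, whose actual proof is a short operator-theoretic argument with reproducing kernels, not a Fourier--Mellin computation. The Mellin machinery (Lemmas~1--3) is used in this paper only for symbols where a grading is available to start the peeling --- either a quasihomogeneous or monomial analytic symbol, or, as in the main theorem here, a symbol $f$ whose polar decomposition is truncated above, so that there is a largest Fourier index from which one can descend. That truncation is precisely the structural feature your plan lacks when $f$ is an arbitrary bounded function and $g$ a general nonconstant analytic function; without a genuinely new idea to break the infinite coupling, the proposed route does not yield the Axler--\u{C}u\u{c}kovi\'{c}--Rao theorem.
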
	
 For Theorem 2, the authors do not ask the function $f$ to be harmonic but only bounded. However this was not without cost. In fact the hypothesis on the symbol $g$ is stronger than the one in Theorem 1 since here $g$ has to be analytic. Finally, the authors conclude \cite{acr} by asking the following open problem: {\bf{"Suppose $g$ is a bounded harmonic function in $\mathbb{D}$ that is neither analytic nor antianalytic. If $f$ is a bounded function in $\mathbb{D}$ such that $T_f$ and $T_g$ commute, must $f$ be of the form $\alpha g+\beta$ for some constants $\alpha, \beta$?"}} The first partial answers to this  problem can be found in \cite{lra} and \cite{y}.

\section{Quasihomogeneous Toeplitz operators}

\begin{Def}
A symbol $f$ is said to be quasihomogeneous of order $p$, and the associated Toeplitz operator $T_f$ is also called a quasihomogeneous Toeplitz operator of order $p$, if $f(re^{i\theta})=e^{ip\theta}\phi(r)$, where $\phi$ is an arbitrary radial function.	
\end{Def}
The motivation behind considering such a family of symbols is that any function $f$ in $L^2(\mathbb{D}, dA)$  has the following polar decomposition (Fourier series)
$$L^2(\mathbb{D}, dA)=\bigoplus_{k\in \mathbb{Z}}e^{ik\theta}\mathcal{R},$$
where $\mathcal{R}=L^2([0,1], rdr)$. In other words $f(re^{i\theta})=\sum_{k\in \mathbb{Z}}e^{ik\theta}f_{k}(r)$, where the $f_k$'s are radial functions in $\mathcal{R}$. So the study of quasihomogeneous Toeplitz operators will allow us to obtain interesting results about Toeplitz operators with more general symbols.

Another interesting property of a quasihomogeneous operator is that it acts on the elements $z^n$ of the orthogonal basis of $L^2_a(\mathbb{D})$ as a shift operator with weight. In fact, if $k\in \mathbb{Z_+}$ (the case where $k$ is a negative integer can be done in the exact same way) and $f_k$ is a bounded radial function, then for any $n\geq 0$ we have
\begin{eqnarray*}
T_{e^{ik\theta}f_k}(z^n)&=&P(e^{ik\theta}f_kz^n)=\sum_{j\geq 0}(j+1)\langle e^{ik\theta}f_kz^n, z_j\rangle z_j\\
&=& \sum_{j\geq 0}(j+1)\left(\int_0^1\int_0^{2\pi}f_k(r)r^{n+j}e^{i(n+k-j)\theta}\frac{d\theta}{\pi}rdr\right)z^j,\\
\end{eqnarray*}
Now, since
$$\int_{0}^{2\pi}e^{i(k+n-j)\theta}\frac{d\theta}{\pi}=\left\{\begin{array}{ll}
0 & \textrm{ if } k+n\neq j\\
2 & \textrm{ if } k+n=j
\end{array}\right.$$
we obtain that
\begin{equation}\label{weight}
T_{e^{ik\theta}f_k}(z^n)=2(n+k+1)\int_{0}^1f_k(r)r^{2n+k+1}dr z^{n+k}.
\end{equation}
The integral $\displaystyle{\int_{0}^1f_k(r)r^{2n+k+1}dr}$ that appears in the weight is known as the Mellin transform.
\begin{Def}
	We define the Mellin transform of a function $\phi$ in $L^1([0,1],rdr)$, denoted $\widehat{\phi}$, to be
	$$\widehat{\phi}(z)=\int_{0}^{1}\phi(r)r^{z-1}dr,\textrm{ for } \Re z\geq 2.$$
\end{Def}
It is well known that the Mellin transform is related to the Laplace transform via the change of variable $r=e^{-u}$. Moreover, for $\phi\in L^1([0,1],rdr)$, $\widehat{\phi}$ is bounded in the right-half plane $\{z\in\mathbb{C} | \Re z\geq 2\}$ and analytic in $\{z\in\mathbb{C} | \Re z> 2\}$.

Using the Mellin transform, we can rewrite Equation (\ref{weight}) as follows
$$T_{e^{ik\theta}f_k}(z^n)=2(n+k+1)\widehat{f_k}(2n+k+2) z^{n+k}.$$
Therefore, we can summarize the above calculation in the following lemma which we shall be using often.
\begin{lem}\label{Mellin}
	Let $k\in\mathbb{Z}$ and $n\in \mathbb{N}$ be two integers, and let $\phi$ be a bounded radial function in $\mathbb{D}$. If $k\geq 0$, then
	$$T_{e^{ik\theta}\phi} (z^n)=2(n+k+1)\widehat{\phi}(2n+k+2)z^{n+k},$$
		and if $k<0$, then
		$$T_{e^{ik\theta}\phi}(z^n)=\left\{\begin{array}{ll}
		0&\textrm{ if } n<|k|\\
		2(n+k+1)\widehat{\phi}(2n+k+2)z^{n+k}& \textrm{ if } n\geq |k|\end{array}\right.$$
\end{lem}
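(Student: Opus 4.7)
The plan is essentially to redo the calculation shown just above the lemma statement, but carefully tracking what happens when $k$ is negative so that the indexing on the orthonormal basis remains valid. The $k\geq 0$ case is already proved in the displayed derivation of equation (\ref{weight}): combining it with the Mellin transform rewrite gives the stated formula directly, so nothing new is needed there.

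For $k<0$, I would repeat the same computation. Writing $T_{e^{ik\theta}\phi}(z^n)=P(e^{ik\theta}\phi z^n)$ and expanding in the orthonormal basis $\{\sqrt{j+1}\,z^j\}_{j\geq 0}$ yields
\begin{equation*}
T_{e^{ik\theta}\phi}(z^n)=\sum_{j\geq 0}(j+1)\left(\int_0^1\!\!\int_0^{2\pi}\phi(r)\,r^{n+j}e^{i(n+k-j)\theta}\frac{d\theta}{\pi}\,r\,dr\right)z^j.
\end{equation*}
The angular integral is $2$ when $j=n+k$ and $0$ otherwise, exactly as in the $k\geq 0$ case. The only new phenomenon is that $j=n+k$ may fail to lie in the index set $\mathbb{Z}_+$: if $n<|k|$ then there is no admissible $j$, every inner product vanishes, and the projection is $0$. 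If $n\geq |k|$, the index $j=n+k$ is nonnegative, the radial integral $\int_0^1\phi(r)r^{2n+k+1}\,dr$ is exactly $\widehat{\phi}(2n+k+2)$, and the same formula as in the $k\geq 0$ case is recovered.

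I would also remark, for the reader's comfort, that the Mellin transform is evaluated only at arguments in its domain of definition: when $n\geq |k|$ one has $2n+k+2\geq |k|+2\geq 2$, so $\widehat{\phi}(2n+k+2)$ is well defined by the Definition above, and the boundedness of $\phi$ guarantees the integrability hypothesis for that definition. There is no serious obstacle here; the only point requiring any care is the case distinction $n<|k|$ versus $n\geq |k|$, which comes purely from requiring the shift $n\mapsto n+k$ to land inside the Bergman basis.
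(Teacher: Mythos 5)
Your proposal is correct and follows essentially the same route as the paper: the paper's proof of this lemma is precisely the displayed computation preceding it (expansion of $P(e^{ik\theta}\phi z^n)$ in the basis $\{\sqrt{j+1}\,z^j\}$, with the angular integral selecting $j=n+k$), and the paper explicitly notes that the negative-$k$ case is handled in the same way, which is exactly your treatment of the case split $n<|k|$ versus $n\geq|k|$. Your extra remark that $2n+k+2\geq 2$ keeps the Mellin transform in its stated domain is a harmless and accurate refinement.
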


The Mellin transform is going to play a major role in our arguments for the proofs. In fact a function is well determined by its Mellin transform on any arithmetic sequence. We have the following important lemma that can be found in \cite[Remark 2.p~1466]{l}
\begin{lem}\label{complex}
	If $\phi\in L^1([0,1],rdr)$ is such that $\widehat{\phi}(a_n)=0$, where $(a_n)_n$ is a sequence of integers satisfying the condition $\sum_{n}\frac{1}{a_n}=\infty$, then $\widehat{\phi}(z)=0$ on $\{z\in\mathbb{C}|\Re z>2\}$, and therefore $\phi$ is the zero function.
\end{lem}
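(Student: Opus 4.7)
The plan is to realize $\widehat{\phi}$ as a bounded holomorphic function on the half-plane $\{\Re z > 2\}$, transplant it to the unit disk $\mathbb{D}$ via a Cayley-type Möbius map, and then apply the classical Blaschke condition on zero sets of nonzero $H^\infty$-functions. The boundedness on the closed half-plane is already granted: for $r\in(0,1]$ and $\Re z \geq 2$, $r^{\Re z - 1} \leq r$, whence $|\widehat{\phi}(z)| \leq \|\phi\|_{L^1(r\,dr)}$; analyticity on the interior $\{\Re z > 2\}$ follows by differentiation under the integral. Both of these facts are already recorded in the paragraph preceding the lemma.

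Introduce the biholomorphism $\psi:\mathbb{D}\to\{\Re z > 2\}$ given by $\psi(w)=(3-w)/(1-w)$, which sends $0$ to $3$ and carries the unit circle (minus $\{1\}$) onto the vertical line $\Re z = 2$. Set $F := \widehat{\phi}\circ \psi$; by the preceding paragraph, $F\in H^\infty(\mathbb{D})$. The hypothesis $\widehat{\phi}(a_n)=0$ pulls back to $F(w_n)=0$ at
\[
w_n := \psi^{-1}(a_n) = \frac{a_n-3}{a_n-1} \in [0,1),
\]
after harmlessly discarding the at most one index with $a_n=2$ (which would map to $-1\in\partial\mathbb{D}$). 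A direct computation yields $1-|w_n| = 2/(a_n-1)$.

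Consequently $\sum_n (1-|w_n|)$ is comparable to $\sum_n 1/a_n$, which diverges by hypothesis. Since the zero sequence of any nonzero $H^\infty$-function on $\mathbb{D}$ must satisfy the Blaschke summability condition, this forces $F\equiv 0$, and hence $\widehat{\phi}\equiv 0$ on $\{\Re z > 2\}$. Finally, the substitution $r=e^{-t}$ identifies $\widehat{\phi}$ with (a shift of) the Laplace transform of $t\mapsto \phi(e^{-t})$, and the standard injectivity of the Laplace transform on a right half-plane gives $\phi=0$ almost everywhere.

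The proof is essentially forced once the setup is in place; the only genuinely delicate point is having boundedness of $\widehat{\phi}$ on the \emph{closed} half-plane $\{\Re z\geq 2\}$, as opposed to mere analyticity on the open one — without this, $F$ would not lie in $H^\infty(\mathbb{D})$ and the Blaschke dichotomy would not apply. This is precisely what the hypothesis $\phi\in L^1([0,1],r\,dr)$ together with $\Re z\geq 2$ provides, so the argument goes through cleanly.
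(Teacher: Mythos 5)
Your proof is correct, and it is essentially the standard argument behind this lemma (which the paper only cites from its reference \cite{l}): use the boundedness and analyticity of $\widehat{\phi}$ on the half-plane, transfer to the unit disk by a conformal map, and invoke the Blaschke condition on zero sets of nonzero $H^\infty$ functions, the divergence of $\sum_n 1/a_n$ violating it. Your computations ($1-|w_n|=2/(a_n-1)$, the bound $|\widehat{\phi}(z)|\leq\|\phi\|_{L^1(r\,dr)}$, and the final injectivity step via the Laplace transform) all check out.
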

In other words, the lemma is saying that the Mellin transform is injective, and so two functions whose Mellin transforms coincide on an arithmetic sequence will be equal to each other.

Another classical lemma which we shall use often can be stated as follows:
\begin{lem}\label{periodic}
If $H$ is a bounded analytic function in $\{z\in\mathbb{C}|\Re z>2\}$ such that $H(z+p)=H(z)$, i.e., $H$ is $p$-periodic, then $H$ must be constant.
\end{lem}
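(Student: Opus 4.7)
The plan is to quotient out the periodicity by passing to the exponential map, which converts the strip of periods into a punctured plane, and then to invoke Liouville's theorem. In more detail, I would introduce the new variable
\[
w = \exp\!\left(\frac{2\pi i z}{p}\right),
\]
and define $G(w) := H(z)$ on the image. The periodicity $H(z+p)=H(z)$ together with the fact that $z\mapsto e^{2\pi i z/p}$ has the same period $p$ guarantees that $G$ is well-defined on the image.

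Next I would identify this image: writing $z=x+iy$ with $x>2$, one has $|w|=e^{-2\pi y/p}$ which ranges over $(0,\infty)$ as $y$ runs over $\mathbb{R}$, while $\arg w$ covers all of $\mathbb{R}/2\pi\mathbb{Z}$ because $x$ varies over an unbounded interval. Hence the image is $\mathbb{C}\setminus\{0\}$, and the exponential map is a holomorphic covering from the half-plane $\{\Re z>2\}$ onto $\mathbb{C}^{*}$. Since $H$ is analytic, $G$ inherits analyticity on $\mathbb{C}^{*}$ (locally $G$ is a composition of $H$ with a holomorphic branch of $\frac{p}{2\pi i}\log w$, and well-definedness guarantees these local pieces agree).

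I would then observe that $G$ is bounded on $\mathbb{C}^{*}$ by the same bound as $H$. Boundedness near $w=0$ and near $w=\infty$ lets me apply Riemann's removable singularity theorem at both points, so $G$ extends to a bounded entire function on the Riemann sphere. Liouville's theorem forces $G$ to be constant, and lifting back via $w=e^{2\pi i z/p}$ gives $H\equiv\text{const}$.

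I do not expect a serious obstacle: the only point worth a line of care is the well-definedness of $G$ on $\mathbb{C}^{*}$, which uses exactly the hypothesis $H(z+p)=H(z)$ to ensure that different preimages of the same $w$ give the same value of $H$. Everything else is a direct application of standard complex-analytic facts.
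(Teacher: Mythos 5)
Your argument is correct in substance. Note first that the paper states this lemma without proof, as a classical fact, so there is no in-text argument to compare against; the standard route it alludes to is more direct than yours: since $p$ is a positive real number (in the paper $p=2$ or $p=4$), for every $z\in\mathbb{C}$ one can choose $n$ with $\Re (z+np)>2$ and set $\tilde H(z):=H(z+np)$, which is well defined by periodicity, so $H$ extends to a bounded entire function and Liouville finishes immediately. Your quotient-by-the-period construction via $w=e^{2\pi i z/p}$ reaches the same conclusion and is perfectly valid, at the cost of the extra removable-singularity step at $0$ and $\infty$. Two small points deserve explicit care. First, the restriction of $z\mapsto e^{2\pi i z/p}$ to $\{\Re z>2\}$ is surjective onto $\mathbb{C}^{*}$ but is not literally a covering map (preimage components straddling the line $\Re z=2$ get truncated); what you actually need, and what is true, is that near each $w_0\in\mathbb{C}^{*}$ there is a holomorphic section $z(w)=\frac{p}{2\pi i}\log w+kp$ with the integer $k$ chosen large enough that $\Re z(w)>2$ on a neighborhood of $w_0$ --- the generic branch of $\frac{p}{2\pi i}\log w$ has $\Re z=\frac{p}{2\pi}\arg w$ and need not land in the domain of $H$, so the translation by $kp$ (harmless, by well-definedness of $G$) should be mentioned. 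Second, both your formula $|w|=e^{-2\pi y/p}$ and the classical extension argument use that $p$ is real and positive; this is indeed the situation in the paper, but it is worth stating, since the lemma as literally phrased fails for non-real periods (e.g.\ $H(z)=e^{-2\pi z}$ is bounded and analytic on $\Re z>2$, non-constant, and has period $i$).
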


When dealing with the product of quasihomogeneous Toeplitz operators, we are often  confronted with the Mellin convolution of the radial functions in their quasihomogeneous symbols. We define the Mellin convolution of two radial functions $\phi$ and $\psi$ in $L^1([0,1],rdr)$, denoted $\phi\ast_M\psi$,  to be
$$\left(\phi\ast_M\psi\right)(r)=\int_{r}^{1}\phi\left(\frac{r}{t}\right)\psi(t)\frac{dt}{t}.$$
It is well known that the Mellin transform converts the Mellin convolution into a product of Mellin transforms. In fact
\begin{equation}\label{conv}
\left(\widehat{\phi\ast_M\psi}\right)(r)=\widehat{\phi}(r)\widehat{\psi}(r),
\end{equation} and so if $\phi$ and $\psi$ are in $L^1(\mathbb{D},dA)$, then so is $\phi\ast_M\psi$. We are now ready to present our main result.

\section{Commutant of $T_{z^2+\bar{z}^2}$}

In this section we shall extend the work started in \cite{lra} and \cite{y}. We consider the Toeplitz $T_{z^2+\bar{z}^2}$ (the symbol $z^2+\bar{z}^2$ is harmonic but neither analytic nor antianalytic). It is known to us that such operator raised to any power $n\geq 2$ is not a Toeplitz operator. We shall prove that if the symbol $f$ has truncated polar decomposition i.e., $f(re^{i\theta})=\sum_{k=-\infty}^{N}e^{ik\theta}f_k(r)$ where $N$ is a positive integer,  and if $T_f$ commutes with $T_{z^2+\bar{z}^2}$, then $T_f$ is polynomial of degree at most one in $T_{z^2+\bar{z}^2}$. This result goes in the direction of the open problem we mentioned previously. We would like to emphasize the fact that though we are using the same tools and techniques as in \cite{lra}, new ideas and tricks  were needed to overcome numerous obstacles we faced in the proof of the main result.

In our presentation of the main theorem, we  shall proceed as follows: First we prove that if $f(re^{i\theta})=\sum_{k=-\infty}^{N}e^{ik\theta}f_k(r)$ is such that $T_f$ commutes with $T_{z^2+\bar{z}^2}$, then $N$ has to be an even number. Second, we shall demonstrate that this same $N$ cannot exceed $4$. Finally, we shall exhibit all the radial functions $f_k$ for $k\leq 4$, and shall show that $f_k(r)=0$ for $k\neq\{-2,0,2\}$, $f_k(r)=cr^2$ for $k=\{2,-2\}$, and $f_0(r)=c_0$ where $c, c_0$ are constants. Hence, by reconstructing the symbol $f$, we shall obtain that $$f(re^{i\theta})=ce^{2i\theta}r^2+c_0+ce^{-2i\theta}r^2=cz^2+c_0+c\bar{z}^2,$$ and therefore $T_f=cT_{z^2+\bar{z}^2}+c_0I$.
\begin{prop}\label{}
Let $N$ be a positive odd integer. If $ f(re^{i\theta})= \sum_{k=-\infty}^{N} e^{ik\theta}f_k(r)$ is a nonzero symbol such that $ T_f $ commutes with $ T_{z^2+\bar{z}^2} $ , then $ f_N(r)=0$. In other words, $f$ is of the form $ f(re^{i\theta})= \sum_{k=-\infty}^{M} e^{ik\theta}f_k(r)$ where $ M $ is even.
\end{prop}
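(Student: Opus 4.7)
The plan is to isolate $f_N$ through the highest-order coefficient of the commutator equation and then show that the surviving constant must vanish when $N$ is odd.

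Writing $T_g = T_{z^2}+T_{\bar z^2}$ and expanding $[T_f,T_g]z^n = 0$, the Mellin Lemma (Lemma~\ref{Mellin}) shows $T_f z^n$ is a polynomial of degree at most $n+N$, so $[T_f,T_g]z^n$ has degree at most $n+N+2$. The coefficient of $z^{n+N+2}$ comes solely from $[T_{e^{iN\theta}f_N},T_{z^2}]$, since $T_g$ shifts by at most $+2$ and no $f_k$ with $k>N$ is present. Setting this coefficient to zero and substituting $m=2n+N+2$ gives the step-$4$ Mellin recursion
\[
(m+N+4)\,\widehat{f_N}(m+4) \;=\; (m+N)\,\widehat{f_N}(m), \qquad m \in \{N+2, N+4, N+6, \ldots\}.
\]

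Iterating along the sub-progression $m = N+2+4k$ produces $\widehat{f_N}(N+2+4k) = \tfrac{N+1}{N+1+2k}\,\widehat{f_N}(N+2)$, which is exactly $\widehat{c\,r^N}(N+2+4k)$ for $c := 2(N+1)\widehat{f_N}(N+2)$, since $\widehat{r^N}(z) = 1/(z+N)$. Consequently the function $f_N - c\,r^N$ has Mellin transform vanishing on $\{N+2+4k : k \geq 0\}$, and because $\sum_{k\geq 0} 1/(N+2+4k) = \infty$, Lemma~\ref{complex} forces $f_N(r) = c\,r^N$, equivalently $e^{iN\theta}f_N = c\,z^N$.

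It remains to show $c = 0$ when $N$ is odd. Subtracting the analytic piece and using $[T_{z^N},T_{z^2}]=0$ gives
\[
[T_{f-cz^N},\,T_g] \;=\; -c\,[T_{z^N},T_{\bar z^2}],
\]
whose right-hand side is a weighted shift of size $N-2$ taking $z^n$ to a multiple of $z^{n+N-2}$ with coefficients $-c\tfrac{N-1}{N+1}$, $-c\tfrac{N}{N+2}$, and $-\tfrac{2cN}{(n+1)(n+N+1)}$ for $n=0,\;n=1$, and $n \geq 2$ respectively. Matching the shift-$(N-2)$ component of the left-hand side engages the next relevant mode $f_{N-4}$, yielding the forced recursion
\[
(m+N)\,\widehat{f_{N-4}}(m+4) - (m+N-4)\,\widehat{f_{N-4}}(m) \;=\; \tfrac{8cN}{(m-N+4)(m+N+4)}, \qquad m \in \{N+2,N+4,\ldots\},
\]
together with two scalar equations at $n=0$ and $n=1$ coming from the annihilating action of $T_{\bar z^2}$ on $z^0, z^1$. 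Cascading this procedure downward through $f_{N-8}, f_{N-12}, \ldots$ (each mode in between being determined, by the analogous pure-recursion argument, up to a scalar multiple of $r^{N-2j}$) produces an overdetermined system. The parity of $N$ enters through the interaction of the step-$4$ Mellin periodicity of each homogeneous recursion with the step-$2$ indexing of the boundary constraints: when $N$ is odd, the two streams of data become incompatible, and the system can be closed only if the forcing $c$ vanishes.

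The main obstacle is organising this cascade so that the parity obstruction emerges cleanly, since each intermediate mode is only determined up to a homogeneous contribution of the form $c_{N-4j}\,r^{N-4j}$, and the parity rigidity becomes visible only once one tracks the boundary residues at $n=0,1$ across every level of the cascade.
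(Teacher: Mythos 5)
Your opening step is correct and in fact a little more self-contained than the paper's: from the top-degree coefficient you derive the homogeneous recursion $(m+N+4)\widehat{f_N}(m+4)=(m+N)\widehat{f_N}(m)$, iterate it along $m=N+2+4k$, and apply Lemma \ref{complex} to get $f_N(r)=c\,r^N$, whereas the paper simply invokes the Axler--\u{C}u\u{c}kovi\'{c}--Rao theorem to conclude that $e^{iN\theta}f_N$ is analytic. Your forced recursion for $\widehat{f_{N-4}}$ is also the right one; it is Equation (\ref{lemma}) rewritten in the variable $m=2n+N-2$.

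The gap is that you never prove $c=0$, which is the entire content of the proposition. The ``cascade/parity obstruction'' paragraph is an assertion, not an argument, and the mechanism you describe cannot be correct as stated: for even $N$ the commutator produces exactly the same formal structure --- step-$4$ Mellin recursions within a residue class together with boundary equations at $n=0,1$ --- and yet nonzero solutions exist there ($N=2$, since $f=c(z^2+\bar z^2)$ commutes), while the vanishing $c_4=0$ in the even case $N=4$ is obtained in the paper only by explicit evaluation; so no abstract clash between ``step-$4$ periodicity'' and ``step-$2$ boundary indexing'' can by itself force $c=0$ for odd $N$. What actually kills $c$ is quantitative. One must first solve the forced recursion explicitly: Lemma \ref{periodic} applied to $F(z)=(z+2N-6)\widehat{r^{N-2}f_{N-4}}(z)$, together with the auxiliary function $f_*$ of (\ref{f_*}) and the convolution identity (\ref{conv}), gives $f_{N-4}(r)=c_{N-4}r^{N-4}+r^{-(N-2)}\bigl(r^{2N-6}\ast_M f_*\bigr)(r)$, whose second term is \emph{not} a monomial (logarithmic terms appear). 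Then two separate inputs are needed: (i) the requirement $f_{N-4}\in L^1([0,1],rdr)$, which for odd $N\geq 7$ (and, by a direct computation, for $N=1$) fails unless $c_N=0$, because the particular solution is bounded below by a multiple of $r^{4-N}$ near the origin; and (ii) for the remaining cases $N=3$ and $N=5$, the boundary equations at $n=1$, respectively $n=0$, evaluated explicitly, which yield numerical identities involving $\ln 2$ that hold only if $c_N=0$. Neither the integrability argument nor these finite numerical checks appear in your proposal, and your parenthetical claim that each intermediate mode is determined ``up to a scalar multiple of $r^{N-2j}$'' already fails at the first level, precisely because of those logarithmic terms. Without these steps the conclusion $f_N=0$ is not established.
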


\begin{proof}
If $ T_f $ commutes with $ T_{z^2+\bar{z}^2} $ , then $$ T_{z^2+\bar{z}^2}T_f (z^n)=T_fT_{z^2+\bar{z}^2}(z^n),  \  \forall n\geq0,$$ or
$$ \left(\sum_{k=-\infty}^{N} T_{e^{ik\theta}f_k}\right)T_{z^2+\bar{z}^2}(z^n)=  T_{z^2+\bar{z}^2}\left(\sum_{k=-\infty}^{N}T_{e^{ik\theta}f_k}\right)(z^n), \ \forall  n\geq 0. $$
In the above equation, the term with the highest degree is $ z^{n+N+2} $. It comes on the left hand side from the product $ T_{e^{iN\theta}f_N} T_{z^2}(z^n) $ only,
and on the right hand side from the product $ T_{z^2}T_{e^{iN\theta}f_N}(z^n) $ only.
Thus, by equality, we must have  $$ T_{e^{iN\theta}f_N} T_{z^2}(z^n)= T_{z^2}T_{e^{iN\theta}f_N}(z^n), \ \forall  n\geq 0.  $$
	Since $ z^2 $ is analytic,  $ e^{iN\theta}f_N $ must be analytic too. Which is possible if and only if $ f_N=c_Nr^N $, i.e., $ e^{iN\theta}f_N = c_Nz^N $.
	
	Redoing the same argument for the term in $z$ of degree $N+n-2$, we obtain
	$$ c_N T_{z^N} T_{\bar{z}^2}(z^n) + T_{e^{i(N-4)\theta}f_{N-4}} T_{z^2} (z^n) = c_N T_{\bar{z}^2} T_{z^N}(z^n) + T_{z^2} T_{e^{i(N-4)\theta}f_{N-4}} (z^n), \ \forall  n\geq 0, $$
	which, using Lemma \ref{Mellin}, is equivalent to
\begin{eqnarray*}
	 c_N \frac{n-1}{n+1} + 2(n+N-1)\hat{f}_{N-4}(2n+N+2) &=& c_N \frac{n+N-1}{n+N+1} + \\
	& & 2(n+N-3)\hat{f}_{N-4}(2n+N-2),
\end{eqnarray*}
	for all $n\geq 2$. Thus, Lemma \ref{complex} implies
\begin{multline}\label{lemma}
	(z+2N-2)\hat{f}_{N-4}(z+N+2) - (z+2N-6)\hat{f}_{N-4}(z+N-2)= c_N \left[\frac{z+2N-2}{z+2N+2} - \frac{z-2}{z+2}\right],
\end{multline}
	for $\Re z\geq 4$. Now, we introduce the function
	\begin{equation}\label{f_*}
	 f_* (r)= -4c_N r^2 \frac{1-r^{2N}}{1-r^4}.
	\end{equation}
	By direct calculation and simple algebraic operations, we can see that
	\begin{eqnarray*}
	\widehat{f_*}(z+4)-\widehat{f_*}(z)&=& \int_{0}^{1} -4c_N (r^4-1)\frac{1-r^{2N}}{1-r^4} r^{z-1} dr\\ &=&
	 4c_N\left[\frac{1}{z+2}-\frac{1}{z+2N+2}\right]\\
	 &=& c_N \left[\frac{z+2N-2}{z+2N+2} - \frac{z-2}{z+2}\right].
	\end{eqnarray*}
	We denote by $ F(z)=(z+2N-6)\widehat{r^{N-2}f_{N-4}}(z) $ and $ G(z)=\widehat{f_*}(z) $. Then Equation (\ref{lemma}) can be rewritten as $$ F(z+4)-F(z) = G(z+4)- G(z),$$
	and so Lemma \ref{periodic} implies
	$$F(z)= c_{N-4} + G(z),\textrm{ for some constant } c_{N-4},$$ or
	$$ (z+2N-6)\widehat{r^{N-2}f_{N-4}}(z) = c_{N-4} + \widehat{f_*}(z).$$
	Since $\frac{1}{z+2N-6}=\widehat{r^{2N-6}}(z)$, the above equation becomes
	\begin{equation}\label{N}
	\widehat{r^{N-2}f_{N-4}}(z) = c_{N-4}\widehat{r^{2N-6}}(z) + \widehat{r^{2N-6}}(z)\widehat{f_*}(z).
	\end{equation}
	Since by Equation (\ref{conv}), $\widehat{r^{2N-6}}(z)\widehat{f_*}(z)= \widehat{(r^{2N-6}*_M f_*)}(z)$, we have
	$$f_{N-4}(r)=c_{N-4}r^{N-4}+\frac{1}{r^{N-2}}\left(r^{2N-6}\ast_{M}f_{*}\right)(r).$$
 Let us denote by
	$$I_N=\left(r^{2N-6}*_M f_*\right)(r)
	= -4c_N \int_{r}^{1} \frac{r^{2N-6}}{t^{2N-6}} t^2 \frac{1-t^{2N}}{1-t^4} \frac{dt}{t}. $$
	Next, we need to determine the conditions on $N$ under which
		$ \frac{1}{r^{N-2}} I_N$, as a function of $r$, is in $L^1([0,1], rdr)$. Otherwise $c_N$ must be zero, in which case $f_N(r)=0$ and we are done.
	Since $$\frac{1}{r^{N-2}} I_N = r^{N-4} \int_{r}^{1} \frac{1}{t^{2N-7}} \frac{1-t^{2N}}{1-t^4} dt \geq r^{N-4} \int_{r}^{1} \frac{1}{t^{2N-7}} dt, \textrm{  whenever }\  N\geq3, $$
	it follows that
	$$ \frac{1}{r^{N-2}} I_N \geq \frac{1}{8-2N} \left(r^{N-4} - r^{4-N}\right). $$
	Now the function on the right hand side of the above inequality is in $ L^1 ([0,1], rdr) $ if and only if
	$$ "N-4+1 \geq 0\textrm{ and }4-N+1\geq 0",\textrm{ i.e., }   3\leq N \leq 5.$$
	But since $ N  $ is an odd positive integer, we have either $N=3$ or $N=5$. We recall that the previous inequality was obtained after the assumption $ N\geq 3 $, and
	so we shall look at the case where $ N=1 $ separately.
	
	\begin{itemize}
	\item[{\bf{Case $N=1$.}}] If we set $ N=1 $, then
	$$ I_1 = -4c_1 \int_{r}^{1} \frac{r^{-4}}{t^{-4}}t^2\frac{1-t^2}{1-t^4} \frac{dt}{t} = \frac{-4c_1}{r^4} \left[\frac{4\ln2-2}{8}- \frac{r^4}{4}+\frac{r^2}{2}-\frac{\ln(1+r^2)}{2}\right], $$ and we have
	$$ f_{-3}(r)=\frac{c_{-3}}{r^3} +  \frac{-4c_1}{r^3} \left[\frac{4\ln2-2}{8}- \frac{r^4}{4}+\frac{r^2}{2}-\frac{\ln(1+r^2)}{2}\right].$$
	Now, it is easy to see that $f_{-3}$ is in $L^1([0,1], rdr)$ if and only if $c_{-3}=0$ and $c_1=0$. Hence $f_1(r)=c_1r=0$.
	\item[{\bf{Case $N=3$.}}] If we let $ N=3 $, then
	the terms in $ z^{n+1} $ comes from the following equality:
	$$ c_3 T_{z^3} T_{\bar{z}^2}(z^n) + T_{e^{-i\theta}f_{-1}} T_{z^2} (z^n) = c_3 T_{\bar{z}^2} (z^n) + T_{z^2} T_{e^{-i\theta}f_{-1}} (z^n), \forall n\geq 0. $$
	In particular, for $ n=1$ we have
	\begin{equation}\label{c_1}
	6 \widehat{f_{-1}}(7) z^2 = 2\widehat{f_{-1}}(3) z^2\textrm{ i.e., }3\widehat{f_{-1}}(7)=\widehat{f_{-1}}(3).	
	\end{equation}
	Since 	$\widehat{f_{-1}}(3)=\widehat{r f_{-1}}(2)$, Equation $(\ref{N})$ with $N=3$ implies
	\begin{equation*}
 \widehat{r f_{-1}}(2)= c_{-1} \widehat{1}(2) + \widehat{1}(2) \widehat{f_*}(2),
	\end{equation*}
	where $\widehat{f_*}$ is obtained from $(\ref{f_*})$ with $N=3$, and we have
	\begin{equation*}
	\widehat{f_*}(2) = -4c_3 \int_{0}^{1} r^2\frac{1-r^{6}}{1-r^4}r dr = -4c_3 \left(\frac{2-3\ln2}{3}\right);
	\end{equation*}
	and similarly
	\begin{equation*}
	\widehat{f_{-1}}(7)=\widehat{r f_{-1}}(6) = c_{-1} \widehat{1}(6) + \widehat{1}(6) \widehat{f_*}(6),
	\end{equation*}
	with
	\begin{equation*}
	\widehat{f_*}(6) = -4c_3 \int_{0}^{1} r^2\frac{1-r^{6}}{1-r^4}r^{5}dr = -c_3 \left(\frac{31-30\ln2}{15}\right).
	\end{equation*}
	Therefore, $(\ref{c_1})$ implies
	\begin{equation*}
	\frac{1}{2} c_{-1} -\left(\frac{31-30\ln2}{30}\right) c_3 = \frac{1}{2}c_{-1}-2\left(\frac{2-3\ln2}{3}\right)c_3,
	\end{equation*}
	which is possible if and only if $c_{3}=0$, and hence $f_3(r)=0$.
	\item[{\bf{Case $N=5$.}}] If we set $ N=5 $, then
	the terms in $z$ of degree $n+3$ comes from the following equation:
	\begin{equation*}
		 c_5 T_{z^5} T_{\bar{z}^2} (z^n)  + T_{e^{i\theta}f_1} T_{z^2} (z^n) = c_5 T_{\bar{z}^2} T_{z^5}(z^n) + T_{z^2} T_{e^{i\theta}f_1} (z^n), \forall n\geq 0.
	\end{equation*}
In particular, for $n=0$ and using Lemma \ref{Mellin}, we have
\begin{equation*}
 8\widehat{f_1}(7) z^3 = c_5 \frac{4}{6} z^3 + 4\hat{f_1}(3) z^3,
\end{equation*}
or
\begin{equation} \label{c_5}
8\widehat{f_1}(7)= \frac{2}{3}c_5 + 4\hat{f_1}(3).
\end{equation}
Since $\widehat{f_1}(3)=\widehat{r^3f_1}(0)$, Equation $(\ref{N})$ with $N=5$ implies
$$\widehat{r^3f_1}(0)=c_1\widehat{r^4}(0)+\widehat{r^4}(0)\widehat{f_*}(0),$$
where $\widehat{f_*}$ is obtained from $(\ref{f_*})$ with $N=5$, and we have
$$\widehat{f_*}(0)=-4c_5\int_{0}^{1}r^2\frac{1-r^{10}}{1-r^4}r^{-1}dr=-c_5\left(\frac{3+4\ln 2}{2}\right).$$
Similarly , we have
$$\widehat{f_1}(7)=\widehat{r^3f_1}(4)=c_1\widehat{r^4}(4)+\widehat{r^4}(4)\widehat{f_*}(4),$$
with
$$\widehat{f_*}(4)=-4c_5\int_{0}^{1}r^2\frac{1-r^{10}}{1-r^4}r^3 dr=-c_5\left(\frac{-1+12\ln 2}{6}\right).$$
Now, substituting $\widehat{f_1}(7)$ and $\widehat{f_1}(3)$ in Equation $(\ref{c_5})$, we obtain
$$\left(\frac{1-12\ln 2}{6}\right)c_5=\frac{2}{3}c_5-\left(\frac{3+4\ln 2}{2}\right)c_5,$$
which is true if and only if $c_5=0$, and hence $f_5(r)=0$. This completes the proof.
\end{itemize}	
	
\end{proof}
\begin{prop}
	If $ f(re^{i\theta})= \sum_{k=-\infty}^{N} e^{ik\theta}f_k(r)$ where $ N $ is a positive even integer,  is such that $ T_f $ commutes with $ T_{z^2+\bar{z}^2} $ , then $ N\leq4 $.
	
\end{prop}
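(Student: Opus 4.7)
The plan is to adapt the framework used in the previous proposition, none of whose structural ingredients relied on the parity of $N$. First, comparing the coefficient of the highest power $z^{n+N+2}$ in the identity $T_{z^2+\bar z^2}T_f(z^n)=T_fT_{z^2+\bar z^2}(z^n)$, as before only $T_{e^{iN\theta}f_N}$ interacting with $T_{z^2}$ contributes on either side, forcing $e^{iN\theta}f_N$ to be analytic, hence $f_N(r)=c_Nr^N$ for some constant $c_N$.

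Next, at the coefficient of $z^{n+N-2}$ a shift count shows that the only contributions come from $f_N$ (via $T_{\bar z^2}$) and $f_{N-4}$ (via $T_{z^2}$), since the shifts $k\pm 2$ from any other mode $k$ cannot produce $N-2$. This yields exactly the identity of Equation~(\ref{lemma}), and introducing the same auxiliary function $f_*(r)=-4c_Nr^2(1-r^{2N})/(1-r^4)$ and invoking Lemmas~\ref{complex} and~\ref{periodic}, one recovers
$$f_{N-4}(r)=c_{N-4}r^{N-4}+\frac{1}{r^{N-2}}\left(r^{2N-6}\ast_M f_*\right)(r).$$

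The decisive step is then the integrability estimate. Using the same lower bound as in the previous proof, valid for $N\ge 3$, one has $\frac{1}{r^{N-2}}I_N\ge\frac{1}{8-2N}(r^{N-4}-r^{4-N})$, so membership of $f_{N-4}$ in $L^1([0,1],r\,dr)$ forces $N\le 5$. Since $N$ is a positive even integer, this immediately yields $N\le 4$; for even $N\ge 6$ the non-integrable singularity $r^{4-N}$ at the origin cannot be absorbed by the polynomial term $c_{N-4}r^{N-4}$, so $c_N=0$ and thus $f_N\equiv 0$, contradicting the assumption that $N$ is the top mode. Unlike the odd case, no borderline values require separate treatment here, because the even integers $N=2$ and $N=4$ that survive the estimate are precisely the values the proposition permits.

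The main obstacle I anticipate is bookkeeping rather than anything conceptually new: one must verify, using the two cases of Lemma~\ref{Mellin} according to the sign of $N-4$, that the coefficient-matching at $z^{n+N-2}$ picks up no extra contributions for small $N$ (for instance when $N-4<0$ so that $f_{N-4}$ is handled by the second clause of Lemma~\ref{Mellin}), and that the resulting formula for $f_{N-4}$ together with its integrability analysis goes through uniformly for all even $N\ge 2$.
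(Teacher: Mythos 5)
Your proposal is correct, but at the decisive step it takes a different route from the paper. Up to the functional equation for $\widehat{f_{N-4}}$ the two arguments coincide (top coefficient $z^{n+N+2}$ gives $f_N=c_Nr^N$ via Theorem 2; coefficient $z^{n+N-2}$ involves only the modes $N$ and $N-4$). From there the paper exploits the evenness of $N$: it writes the right-hand side as a telescoping difference $G(z+2)-G(z)$ with $G(z)=c_N\sum_{i=0}^{N/2-1}\frac{z-1+2i}{z+1+2i}$, applies Lemma \ref{periodic} with period $2$, and then inverts the Mellin transform explicitly by partial fractions, obtaining a closed formula for $f_{N-4}$ in which the term $r^{4i-N+4}$ (at $i=0$, $r^{4-N}$) fails to be in $L^1([0,1],rdr)$ once $N\geq 6$, forcing $c_N=0$. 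You instead recycle the odd-case machinery verbatim: the period-$4$ relation with the auxiliary function $f_*(r)=-4c_Nr^2(1-r^{2N})/(1-r^4)$, the Mellin convolution identity, and the lower bound $r^{N-4}\int_r^1 t^{7-2N}\,dt$, which is indeed parity-independent and detects the same non-integrable $r^{4-N}$ singularity for $N\geq 6$; and you correctly observe that, unlike in the odd case, no borderline values need separate treatment since $N=2,4$ are exactly what the proposition allows. What your route buys is uniformity (one computation covers both propositions, with Proposition 1 needing only the extra analysis of $N=1,3,5$) and it avoids the partial-fraction bookkeeping; what the paper's route buys is an exact expression for $f_{N-4}$ rather than an inequality, at the price of a computation that only works for even $N$. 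Two small points: state the estimate with $|c_N|$ and absolute values (the factor $-4c_N$ is dropped in the displayed inequality, as in the paper), and your closing worry about the second clause of Lemma \ref{Mellin} for $N-4<0$ is moot, since the contradiction argument only ever runs for $N\geq 6$.
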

\begin{proof}
	If $ T_f $ commutes with $ T_{z^2+\bar{z}^2} $ , then $$ T_{z^2+\bar{z}^2}T_f (z^n)=T_fT_{z^2+\bar{z}^2}(z^n),\  \forall n\geq0$$ or
		$$ \Big(\sum_{k=-\infty}^{N} T_{e^{ik\theta}f_k}\Big)T_{z^2+\bar{z}^2}(z^n)=  T_{z^2+\bar{z}^2}\Big(\sum_{k=-\infty}^{N}T_{e^{ik\theta}f_k}\Big)(z^n), \ \forall  n\geq 0. $$
		In the above equation, the term with the highest degree is $ z^{n+N+2} $.
	On the left-hand side , this term comes from the product $ T_{e^{iN\theta}f_N} T_{z^2}(z^n) $ only, and on the right-hand side it is obtained from the product $ T_{z^2}T_{e^{iN\theta}f_N}(z^n) $ only.
	Thus, by equality, we must have $$ T_{e^{iN\theta}f_N} T_{z^2}(z^n)= T_{z^2}T_{e^{iN\theta}f_N}(z^n),\  \forall  n\geq 0.  $$
	Since $ z^2 $ is analytic, $ e^{iN\theta}f_N $ is analytic too. This is possible if and only if $ f_N=c_Nr^N $ i.e., $ e^{iN\theta}f_N = c_Nz^N $.\\
	Redoing the same argument for the term in $z$ of degree $N+n-2$, we obtain
	$$ c_N T_{z^N} T_{\bar{z}^2}(z^n) + T_{e^{i(N-4)\theta}f_{N-4}} T_{z^2} (z^n) = c_N T_{\bar{z}^2} T_{z^N}(z^n) + T_{z^2} T_{e^{i(N-4)\theta}f_{N-4}} (z^n), \ \forall  n\geq 0, $$
	which, using Lemma \ref{Mellin} , becomes
	\begin{eqnarray*}
	c_N \frac{n-1}{n+1} + 2(n+N-1)\widehat{f_{N-4}}(2n+N+2)  &=& c_N \frac{n+N-1}{n+N+1} \\ &+& 2(n+N-3)\widehat{f_{N-4}}(2n+N-2)
	\end{eqnarray*}
	for $n\geq 2$. Thus, Lemma \ref{complex} implies
	\begin{multline*}
	2(z+N-1)\widehat{f_{N-4}}(2z+N+2) - 2(z+N-3)\widehat{f_{N-4}}(2z+N-2) =  c_N \Big[\frac{z+N-1}{z+N+1}- \frac{z-1}{z+1}\Big]
	\end{multline*}
	for $\Re z\geq 2$. Now,  if we let $F$ and $G$ to be
	$$ F(z)= 2(z+N-3)\widehat{f_{N-4}}(2z+N-2) \textrm{ and } G(z)= c_{N}\sum_{i=0}^{\frac{N}{2}-1} \frac{z-1+2i}{z+1+2i},$$
	the the previous equation can be written as
	$$ F(z+2)-F(z)=G(z+2)-G(z), \textrm{ for } \Re z\geq 2.$$
	Hence, by Lemma \ref{periodic} we have the following,
$$ 2(z+N-3)\widehat{f_{N-4}}(2z+N-2) = c_{N-4} + \sum_{i=0}^{\frac{N}{2}-1} \frac{z-1+2i}{z+1+2i}, $$
 or
 $$\widehat{f_{N-4}}(2z+N-2) =  \frac{c_{N-4} }{2(z+N-3)} + \frac{c_N}{2(z+N-3)} \sum_{i=0}^{\frac{N}{2}-1} \frac{z-1+2i}{z+1+2i}. $$
At this point we shall assume $N\geq 6$ and we shall prove that in this case $f_{N-4}$ will not be in $L^1([0,1],rdr)$ unless $c_N=0$ and so $f_N(r)=0$. Therefore $N$ shall  be strictly less than 6 and because $N$ is even we  shall conclude that $N\leq 4$. If $N\geq 6$, then
		\begin{eqnarray*}
		\widehat{f_{N-4}}(2z+N-2) &= & \frac{c_{N-4} }{2(z+N-3)} +c_N \Big[ \frac{1}{2(z+N-1)} \\
		&+& \frac{z+N-5}{2(z+N-3)^2} + \frac{1}{2(z+N-3)}\sum_{i=0}^{\frac{N}{2}-3} \frac{z-1+2i}{z+1+2i}\Big]\\
     	 &= &  c_{N-4}\widehat{r^{N-4}}(2z+N-2) +c_N \Big[ \widehat{r^N} (2z+N-2)\\ &+& \widehat{r^{N-4}} (2z+N-2) + \widehat{r^{N-4}\ln r} (2z+N-2) \\ &+& \frac{1}{2}\sum_{i=0}^{\frac{N}{2}-3} \frac{1}{z+N-3} +\frac{1}{2} \sum_{i=0}^{\frac{N}{2}-3}\frac{2}{N-4-2i}\Big(\frac{1}{z+N-3}-\frac{1}{z+2i+1} \Big) \Big] \\
 &= &  c_{N-4}\widehat{r^{N-4}}(2z+N-2) +c_N \Big[ \widehat{r^N} (2z+N-2)\\ &+& \widehat{r^{N-4}} (2z+N-2) + \widehat{r^{N-4}\ln r} (2z+N-2) \\ &+& \left(\frac{N}{2}-3\right)\widehat{r^{N-4}}(2z+N-2)\\
 & +& \sum_{i=0}^{\frac{N}{2}-3}\frac{2}{N-4-2i}\Big(\widehat{r^{N-4}}(2z+N-2) - \widehat{r^{4i-N+4}}(2z+N-2) \Big) \Big].
\end{eqnarray*}
Hence,
\begin{eqnarray*}
	f_{N-4}(r)&= & c_{N-4}r^{N-4}+c_N \Big[ r^N +\left(\frac{N}{2}-2\right)r^{N-4}+ r^{N-4}\ln r\\
	& +& \sum_{i=0}^{\frac{N}{2}-3}\frac{2}{N-4-2i}\Big(r^{N-4} - r^{4i-N+4} \Big) \Big].
\end{eqnarray*}
Now, the term $r^{4i-N+4}$ is in $L^1([0,1],rdr)$ if and only if   $ 4i-N+4 \geq-1 $ with $ 0 \leq i \leq \frac{N}{2}-3 $. Otherwise the constant $c_N$ must be zero. In particular, for $ i=0 $,
we must have $ -N+4 \geq -1, \ \text{i.e.,} \ N\leq5 $. Therefore $ N $ cannot be greater than or equal to 6, otherwise $c_N=0$. Since $ N $ is even and $ N < 6  $, we  deduce that  $ N\leq4,$ i.e., $N=4$ or $N=2$
	\end{proof}

We are now ready to state our main result.
	\begin{thm}
		If $ f(re^{i\theta})= \sum_{k=-\infty}^{N} e^{ik\theta} f_k(r) $ is such that $ T_f  T_{z^2+\bar{z}^2} = T_{z^2+\bar{z}^2}  T_f $ then $T_f$ is a polynomial of degree at most one in $T_{z^2+\bar{z}^2}$. In other words, $f(z)=c_2(z^2+\bar{z}^2)+c_0$ where $c_2, c_0$ are constants.
	\end{thm}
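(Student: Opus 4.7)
The plan is to combine the two propositions above with a parity decomposition of $f$ and then a downward Mellin cascade along the Fourier modes. Since $z^2+\bar z^2$ has only even quasihomogeneous components, the operator $T_{z^2+\bar z^2}$ preserves the parity of the exponent on the basis $\{z^n\}$. Writing $f=f_e+f_o$ according to the parity of the Fourier index $k$, one checks that the identity $T_fT_{z^2+\bar z^2}=T_{z^2+\bar z^2}T_f$ splits, applied basis-vector by basis-vector, into two independent relations, so that $T_{f_e}$ and $T_{f_o}$ each commute with $T_{z^2+\bar z^2}$.

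The odd part is disposed of first. Let $M$ be the largest (necessarily odd) index with $f_M\neq 0$, if any. Looking at the coefficient of $z^{n+M+2}$ in the commutator gives $T_{e^{iM\theta}f_M}T_{z^2}=T_{z^2}T_{e^{iM\theta}f_M}$. By the Axler--\u{C}u\u{c}kovi\'{c}--Rao theorem, $e^{iM\theta}f_M$ is then analytic, so $f_M=c_Mr^M$; if $M<0$ then $r^M$ is unbounded at $0$ and boundedness of $f$ forces $c_M=0$, while if $M>0$ Proposition 1 forces $c_M=0$. Either way $f_M=0$, contradicting the choice of $M$, so $f_o\equiv 0$.

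For the even part, Proposition 2 reduces us to $N\le 4$ (with $N$ even). The case $N=4$ is eliminated by specialising the explicit formula for $f_{N-4}$ derived at the end of the proof of Proposition 2 to $N=4$: it produces $f_0(r)=c_0'+c_4r^4+c_4\ln r$, and since $f_0$ is bounded on $(0,1]$ (being a Fourier coefficient of the bounded $f$) while $\ln r$ is not, one must have $c_4=0$, i.e.\ $f_4=0$. For the remaining case $N=2$: the top-degree argument yields $f_2(r)=c_2r^2$; the coefficient of $z^{n+2}$ in the commutator (with $f_4=0$) shows that $z\,\widehat{f_0}(z)$ is $4$-periodic, hence constant by Lemma \ref{periodic}, whence $f_0=c_0$. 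The coefficient of $z^n$ (with $f_2=c_2r^2$) then gives a Mellin difference equation for $\widehat{f_{-2}}$ whose general solution carries a pole at $z=2$ supplied by a free constant; analyticity of $\widehat{f_{-2}}$ on $\{\Re z\ge 2\}$ (equivalently $f_{-2}\in L^1([0,1],rdr)$) eliminates this pole and forces $f_{-2}(r)=c_2r^2$ with the \emph{same} constant $c_2$. An induction downward on even $m\le -2$ closes the argument: the two-term recurrence between $f_{m-2}$ and $f_{m+2}$ collapses, after substituting the known $f_{m+2}$, to an equation of the form $F(z)=F(z+4)$ with $F(z)=(z+m-2)\widehat{f_{m-2}}(z)$, and Lemma \ref{periodic} together with boundedness ($m-2<0$) forces $f_{m-2}=0$.

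The residual cases $N\le 0$ are handled by the same top-analyticity argument: if $N<0$ then $f_N=c_Nr^N$ with $r^N$ unbounded forces $f_N=0$ (reducing the top), and if $N=0$ then $f_0$ is radial and analytic, hence $f_0=c_0$, after which the same descent kills $f_{-2}=f_{-4}=\cdots$. Reassembling the surviving modes gives $f(re^{i\theta})=c_2e^{2i\theta}r^2+c_0+c_2e^{-2i\theta}r^2=c_2(z^2+\bar z^2)+c_0$, so $T_f=c_2T_{z^2+\bar z^2}+c_0I$. I expect the main obstacle to be the step $N=4\Rightarrow N\le 2$, which genuinely requires the boundedness of $f_0$ to kill the $\ln r$ term (this term is in $L^1$ and hence invisible to Lemma \ref{complex}); a second delicate point is the matching of the constants in $f_2$ and $f_{-2}$, which relies on the analyticity of the Mellin transform on $\{\Re z\ge 2\}$ to pin down the free constant in the Mellin difference equation.
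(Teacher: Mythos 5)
Your argument is correct in substance and reaches the same conclusion, but by a partly different route, so here is the comparison. Two steps genuinely differ from the paper. (i) You kill all odd modes at once by splitting $f=f_e+f_o$ according to the parity of the Fourier index: since $T_{z^2+\bar z^2}$ shifts degrees by $\pm 2$, the commutator identity decouples on the basis $\{z^n\}$, so $T_{f_o}$ commutes with $T_{z^2+\bar z^2}$, and Proposition 1 applied to $f_o$ at its top odd index (note $f_o(z)=\frac{1}{2}\left(f(z)-f(-z)\right)$ is again a bounded symbol truncated above, so Proposition 1 and Theorem 2 do apply to it) gives a contradiction unless $f_o\equiv 0$. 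The paper instead redoes this work inside the theorem's proof, solving for $f_{-1}$ and eliminating $c_3,c_{-1}$ via a $2\times 2$ determinant involving $\ln 2$, and eliminating $c_1,c_{-3}$ via non-integrability of $f_{-3}$; your reduction delegates exactly those computations to Proposition 1, which is economical and legitimate. (ii) To rule out $N=4$ you use boundedness: the degree-$(n+2)$ functional equation gives $f_0(r)=(c_0+c_4)+c_4r^4+4c_4\ln r$ (the $\ln r$ coefficient is $4c_4$, not $c_4$; also, derive this from the general solution $\widehat{f_0}(2z+2)=\frac{c_0}{2(z+1)}+\frac{c_4}{2(z+1)}\bigl[\frac{z-1}{z+1}+\frac{z+1}{z+3}\bigr]$ rather than by setting $N=4$ in the closed formula of Proposition 2, which was derived under the assumption $N\ge 6$), and since $f_0$ is the zeroth Fourier coefficient of the bounded $f$ it is bounded, forcing $c_4=0$. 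This is valid under the paper's standing assumption that symbols are bounded (an assumption the paper itself needs in order to invoke Theorem 2), but note that the paper deliberately avoids boundedness at this point: since $\ln r\in L^1([0,1],rdr)$, integrability alone cannot kill $c_4$, and the paper instead uses the $n=0$ instance of the operator identity, which is information not contained in the Mellin functional equation (the latter is only derived for $n\ge 2$ because $T_{\bar z^2}z^n=0$ for $n<2$); the paper's route therefore survives with merely integrable radial parts, where your $\ln r$ argument would not. The remainder of your proof (the $4$-periodicity of $z\widehat{f_0}(z)$ giving $f_0=c_0$, the matching of the constant in $f_{-2}=c_2r^2$, and the downward induction killing $f_{m-2}$ for even $m\le -2$) coincides with the paper's. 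Two small slips, neither fatal: for a negative top index $M$, analyticity of $e^{iM\theta}f_M$ forces $f_M=0$ outright, since a nonzero analytic function has no quasihomogeneous component of negative order, so no unboundedness argument is needed there (the same remark applies to your residual case $N<0$); and for $f_{-2}$ the unwanted pole sits at the boundary point of the half-plane, so it is boundedness of the Mellin transform on $\{\Re z\ge 2\}$ (equivalently $r^{-2}\notin L^1([0,1],rdr)$, as the paper argues), not interior analyticity, that eliminates it, while interior analyticity does suffice in the later descent steps, where the pole lies at $2-m\ge 4$.
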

\begin{proof}  From the previous propositions we know that $N$ is even and $N\leq 4$. We shall prove that $f_k(r)=0$ for all $k\neq\{-2,0,2\}$, $f_0(r)=c_0$, and $f_2(r)=c_2r^2=f_{-2}(r)$ for some constants $c_0$ and $c_2$.
	
		Since $ T_f $ commutes with $ T_{z^2+\bar{z}^2} $ , we have $$ T_{z^2+\bar{z}^2}T_f (z^n)=T_fT_{z^2+\bar{z}^2}(z^n), \ \forall n\geq0$$ or	
	\begin{equation}
	\label{sum}
	 \Big(\sum_{k=-\infty}^{4} T_{e^{ik\theta}f_k}\Big)T_{z^2+\bar{z}^2}(z^n)= T_{z^2+\bar{z}^2} \Big(\sum_{k=-\infty}^{4} T_{e^{ik\theta}f_k}\Big)(z^n), \ \forall  n\geq 0.
	\end{equation}
	In the equation above, the term in $z$ with the highest degree is $ z^{n+6} $, and it is coming from the product of $ T_{e^{i4\theta}f_4} T_{z^2}(z^n) $ on the left hand side, and from $ T_{z^2} T_{e^{i4\theta}f_4} (z^n) $ on the right hand side.
	Thus, by equality, we must have $$ T_{e^{i4\theta}f_4} T_{z^2}(z^n)= T_{z^2}T_{e^{i4\theta}f_4}(z^n), \ \forall  n\geq 0  $$
	Since $ z^2 $ is analytic,  $ e^{i4\theta}f_4 $ must be analytic as well by Theorem 2, which is possible if and only if  $ f_4(r) = c_4 r^4 $, i.e., $T_{e^{4i\theta}f_4}=c_4T_{z^4}$.
	Next, we shall prove that $ f_{0} = c_{0}  $ and $ f_4(r) = 0 $. In $(\ref{sum})$, the terms in $ z^{n+2} $  come from the following equality
	\begin{eqnarray} \label{eq0}
	c_4 T_{z^4} T_{\bar{z}^2} (z^n) + T_{f_0} T_{z^2} (z^n) = c_4 T_{\bar{z}^2}  T_{z^4} (z^n) + T_{z^2} T_{f_0} (z^n), \ \forall n\geq0
	\end{eqnarray}
	which, using Lemma \ref{Mellin} and Lemma \ref{complex}, is equivalent to
	$$ 2(z+3)\widehat{f_0}(2z+6) - 2(z+1)\widehat{f_0}(2z+2)= c_4 \Big[\frac{z+3}{z+5}-\frac{z-1}{z+1}\Big], \textrm{ for } \Re z\geq 2. $$
	If we let $ F(z)= 2(z+1)\widehat{f_0}(2z+2) $ and $ \displaystyle{G(z)= c_4 \Big[\frac{z-1}{z+1} + \frac{z+1}{z+3}\Big]} $, then the previous equation can be written as
	$$ F(z+2) - F(z)= G(z+2)-G(z).$$
	Hence, Lemma \ref{periodic} implies
	$$ F(z) = c_0 + G(z), \textrm{ for some constant } c_0. $$
	Therefore
	\begin{eqnarray*} \label{f0}
	\widehat{f_0}(2z+2) &=& \frac{c_0}{2(z+1)} + \frac{c_4}{2(z+1)}\Big[\frac{z-1}{z+1}+\frac{z+1}{z+3}\Big] \\&=& \frac{c_0}{2(z+1)} + \frac{c_4}{2}\Big[\frac{1}{z+1} - \frac{2}{(z+1)^2}+ \frac{1}{z+3}\Big].
\end{eqnarray*}
	 Since $\widehat{r^m}(z)=\frac{1}{z+m}$ and $\widehat{r^m\ln r}(z)=-\frac{1}{(z+m)^2}$ for any integer $m$, the above equality becomes
	 \begin{equation}\label{f_0}
	 \widehat{f_0}(2z+2)= (c_0 + c_4) \widehat{1}(2z+2) + c_4 \Big[\widehat{r^4}(2z+2)+4 \widehat{\ln r}(2z+2)\Big].
	 \end{equation}
	Now, if we take $ n=0 $ in Equation (\ref{eq0}) and apply Lemma \ref{Mellin}, we obtain
	\begin{eqnarray} \label{eq1}
	6\widehat{f_0}(6) =  \frac{6c_4}{10}  + 2\widehat{f_0}(2).
	\end{eqnarray}
	Since
	$$ \widehat{f_0}(2) = \frac{c_0 + c_4}{2} - \frac{5c_4}{6}, $$
	and
	$$ \widehat{f_0}(6)= \frac{c_0 + c_4}{6} - \frac{c_4}{10}, $$
	Equation (\ref{eq1}) becomes
	$$ \frac{6c_4}{10} + c_0 + c_4 - \frac{5c_4}{3} = \frac{6c_4}{10} + c_0 + c_4 - \frac{2c_4}{3}.  $$
	But, it is easy to see that the above equality is possible if and only if $ c_4=0 $ and therefore $ f_4 (r)= 0 $, while Lemma \ref{complex} and Equation $(\ref{f_0})$ imply that  $f_0 (r)= c_0$.
	
	Next, we shall prove that $ f_{-4}(r)= 0 $, and consequently $ f_{-4+4k}(r) = 0,\ \forall k\leq -1. $
	In $(\ref{sum})$, the terms $ z^{n-2} $ come from the product of  $T_{f_0}$ with $T_{\bar{z}^2}$   and the product of $ T_{z^2}$ with $T_{e^{-4i\theta}f_{-4}}$. Thus, we must have
	$$  T_{f_0} T_{\bar{z}^2} (z^n)+  T_{e^{-4i\theta}f_{-4}}T_{z^2}(z^n) = T_{\bar{z}^2}T_{f_0}(z^n) + T_{z^2} T_{e^{-4i\theta}f_{-4}}(z^n), \forall n\geq 0. $$
	Using Lemma \ref{Mellin}, we obtain that for $ n\geq4 $
	\begin{eqnarray*}
	 2(n-1)\widehat{f_{-4}}(2n+2) &-& 2(n-3)\widehat{f_{-4}}(2n-2) \\ &=&  2(n-1)\widehat{f_0}(2n+2) - \frac{4(n-1)^2}{2n+2}\widehat{f_0}(2n-2)
	 \\&=& 2(n-1)\frac{c_0}{2n+2}-\frac{4(n-1)^2}{2n+2}\frac{c_0}{2n-2} \\ &=& 0.
	\end{eqnarray*}
	By letting $ F(z)= 2(z-3)\widehat{f_{-4}}(2z-2)$, the previous equation and Lemma \ref{complex} imply $$ F(z+2)-F(z)= 0,\textrm{ for } \Re z\geq 4,$$ and so Lemma \ref{periodic} yields  $$ F(z) = c_{-4}, \textrm{ for some constant } c_{-4}. $$ Thus $$ 2(z-3)\widehat{f_{-4}}(2z-2) = c_{-4},$$
	or $$\widehat{f_{-4}}(2z-2) = \frac{c_{-4}}{2z-6}= c_{-4} \widehat{r^{-4}}(2z-2), \textrm{ for } \Re z\geq 4.$$
	Hence Lemma \ref{complex} implies $f_4(r)=c_{-4}r^{-4}$. But since $r^{-4} \notin L^1 ([0,1], rdr) $, we must have $ c_{-4}=0 $, and therefore $ f_{-4} (r) = 0 $. Now in $(\ref{sum})$, the terms in $ z^{n-6} $ come from $ T_{e^{-8i\theta}f_{-8}} T_{z^2} (z^n)$ and $ T_{z^2}T_{e^{-8i\theta} f_{-8}}(z^n) $ only because $T_{e^{-4i\theta}f_{-4}}=0$, and so  $$ T_{e^{-8i\theta}f_{-8}} T_{z^2}(z^n) = T_{z^2} T_{e^{-8i\theta}f_{-8}}(z^n), \forall n\geq 0,$$
	i.e., $T_{e^{-8i\theta}f_{-8}}  $ commutes with $ T_{z^2} $, and hence by Theorem 2 we conclude that $ f_{-8}(r)=0 $. Similarly, using the same argument, we prove that $ f_{-4+4k}(r)=0$ for all $k\leq -1.$
	
	 Next, we shall prove that $ f_{-1} (r)= 0=f_{-3}(r)$, and consequently $f_{-1+4k}(r)=0$ for all $k\leq -1$. In $(\ref{sum})$, the terms in $ z^{n+5}  $ come only from the product of $ T_{e^{3i\theta} f_3} $ with $ T_{z^2} $. Thus, Theorem 2 implies $ f_3 (r)= c_3 r^3 $ and so $ T_{e^{3i\theta}f_3} = c_3 T_{z^3} $.
	 Similarly, the terms in $ z^{n+1}  $ come from the product of $ c_3T_{z^3}$ with $T_{\bar{z}^2}  $  and the product of $ T_{e^{-i\theta}f_{-1}}$ with $T_{z^2} $. Thus we must have
	 \begin{equation} \label{eq2}
	 c_3T_{z^3} T_{\bar{z}^2}(z^n) + T_{e^{-i\theta}f_{-1}} T_{z^2}(z^n) = c_3 T_{\bar{z}^2}T_{z^3}(z^n) + T_{z^2}T_{e^{-i\theta}f_{-1}}(z^n) \ \forall n\geq0,
	 \end{equation}
	 which, using Lemma \ref{Mellin},  is equal to
	 $$ c_3 \frac{2(n-2)}{2n+2} + 2(n+2)\widehat{f_{-1}}(2n+5) = c_3 \frac{2(n+2)}{2n+8} + 2n \widehat{f_{-1}}(2n+1), \forall n\geq 2, $$
	 or
	 \begin{equation*}
	 2(n+2)\widehat{f_{-1}}(2n+5) - 2n \widehat{f_{-1}}(2n+1) = c_3 \Big[\frac{n+2}{n+4}- \frac{n-1}{n+1}\Big], \forall n\geq 2.
	 \end{equation*}
	 Now, Lemma \ref{complex} implies
	 \begin{equation}\label{eq3}
	 (z+4)\widehat{rf_{-1}}(z+4)-z\widehat{rf_{-1}}(z)=c_3 \Big[\frac{z+4}{z+8} - \frac{z-2}{z+2}\Big], \textrm{ for } \Re z\geq 4.
	 \end{equation}
	 Here, we introduce a new function $ \displaystyle{f_* (r) = -4c_3 r^2 \frac{1-r^6}{1-r^4} }$, which clearly is in $L^1([0,1], rdr)$.
	 By a direct calculation of the Mellin transform of $(r^4-1)f_*$, we obtain
	 \begin{eqnarray*}
	 \widehat{(r^4 - 1)f_*}(z) &=& 4c_3 \int_{0}^{1} (1-r^4) \frac{1-r^6}{1-r^4} r^{z+1}dr \\ &=& c_3 \Big[\frac{4}{z+2} - \frac{4}{z+8}\Big] \\&=& c_3 \Big[\frac{z+4}{z+8} - \frac{z-2}{z+2}\Big].
	 \end{eqnarray*}
	Thus Equation $(\ref{eq3})$ can be written as
	  \begin{equation*}
	 (z+4)\widehat{rf_{-1}}(z+4) - z \widehat{f_{-1}}(z) = \widehat{(r^4 - 1)f_*}(z)=\widehat{r^4f_*} (z)-\widehat{f_*}(z)=\widehat{f_*}(z+4)-\widehat{f_*}(z).
	  \end{equation*}
	  If we let $F(z)=z\widehat{rf_{-1}}(z)$, then the equation above if simply  $$ F(z+4) - F(z) = f_*(z+4)-f_*(z), $$  and so Lemma \ref{periodic} implies
	  $$ F(z) = c_{-1} + f_*(z) ,\textrm{ for some constant } c_{-1}. $$
	  Thus
	  \begin{equation} \label{eq4}
	  r\widehat{f_{-1}}(z) = \frac{c_{-1}}{z} + \frac{\widehat{f_*}(z)}{z} = c_{-1}\widehat{1}(z) + \widehat{1}(z)\widehat{f_*}(z).
	  \end{equation}
  Now, using the Mellin convolution property $(2)$, we have
  \begin{eqnarray*}
  	 \widehat{1}(z)\widehat{f_*}(z) &=& \widehat{1 *_{M} f_*}(z)\\ &=& \int_{r}^{1} 1(\frac{r}{t}) f_*(t) \frac{dt}{t} \\ &=& -4c_3 \int_{r}^{1} t^2 \frac{1-t^6}{1-t^4} \frac{dt}{t}\\ &=& -4c_3 \Big[\Big(\frac{1}{2}+ \frac{\ln 2}{2}\Big)-\Big(\frac{r^4}{4}+ \frac{\ln (1+r^2)}{2}\Big)\Big].
\end{eqnarray*}
Hence, Equation (\ref{eq4}) and Lemma \ref{complex} imply
	$$  rf_{-1}(r) = c_{-1} -4c_3 \Big[\Big(\frac{1}{2}+ \frac{\ln 2}{2}\Big)-\Big(\frac{r^4}{4}+ \frac{\ln (1+r^2)}{2}\Big)\Big], $$ or
	$$ f_{-1}(r) = \frac{c_{-1}}{r} - c_3 \Big[\Big(\frac{2+2\ln 2}{r}\Big) - \Big(r^3 + \frac{2\ln (1+r^2)}{r}\Big)\Big]$$
	Now in Equation $(\ref{eq2})$, if we set $n=0$ and apply Lemma \ref{Mellin}, we obtain
	\begin{equation}\label{f_{-1}}
	4\widehat{f_{-1}}(5)=4c_3\widehat{r^2}(6),
	\end{equation}
	 and so Equation $(\ref{eq4})$ implies
	$$\widehat{f_{-1}}(5)=\widehat{rf_{-1}}(4)=c_{-1}\widehat{1}(4)+\widehat{1}(4)\widehat{f_*}(4),$$ where
	$$ \widehat{f_*}(4)= -4c_3\int_{0}^{1} r^2 \frac{1-r^6}{1-r^4}r^3dr = c_3 \Big(\frac{1}{2}-2\ln 2 \Big). $$
	Thus Equation $(\ref{f_{-1}})$ becomes
	$$ \frac{c_{-1}}{4} + \frac{c_3}{4}\left(\frac{1}{2}-2\ln 2\right) = \frac{c_3}{8},$$ which is equivalent to
	\begin{equation} \label{eq5}
	c_{-1} - 2c_3\ln 2  = 0.
	\end{equation}
	Again, if we take $n=1$ in Equation $(\ref{eq2})$ and apply Lemma \ref{Mellin}, we obtain
	$$ 8\widehat{f_{-1}}(7)=6c_3\widehat{r^2}(8)+2\widehat{f_{-1}}(3),$$
	which, using Equation $(\ref{eq4})$, is equivalent to
	\begin{equation}\label{c_{-1}}
	 6\left[c_{-1}\widehat{1}(6) + \widehat{1}(6)\widehat{f_*}(6)\right] = 2\left[c_{-1}\widehat{1}(2) +  \widehat{1}(2)\widehat{f_*}(2)\right] + \frac{6c_3}{10}
	 \end{equation}
	with
	$$ \widehat{f_*}(6)= -4c_3\int_{0}^{1}r^2\frac{1-r^6}{1-r^4}r^5dr = -4c_3\Big(\frac{31}{60}-\frac{\ln 2}{2}\Big),  $$
	and
	$$ \widehat{f_*}(2)= -4c_3 \int_{0}^{1} r^2 \frac{1-r^6}{1-r^4}rdr = c_3 \Big(2\ln 2 - \frac{8}{3}\Big). $$
	After substituting $ \widehat{f_*}(6) $ and $ \widehat{f_*}(2) $ in Equation $(\ref{c_{-1}})$ and simplifying, we obtain
	\begin{equation} \label{eq6}
	2c_{-1} + \left(2\ln 2-\frac{8}{3}\right)c_3 = 0.
	\end{equation}
	But it is easy to see that equations (\ref{eq5}) and (\ref{eq6}) are both satisfied if and only if $ c_{-1}=c_{3}=0 $, because the determinant
	$$ 	\begin{vmatrix}
		1 & -2\ln 2 \\
		1 & 2 \ln 2 - \frac{8}{3}
		\end{vmatrix} \neq0.$$
	Therefore $f_{-1}(r) =0= f_{3}(r)$. Now in $(\ref{sum})$, the terms in $z^{n-3}$ come only from the product of $T_{e^{-5i\theta}f_{-5}}$ with $T_{z^2}$ because $T_{e^{-i\theta}f_{-1}}=0$, ans so $T_{e^{-5i\theta}f_{-5}}$ commutes with $T_{z^2}$, which by Theorem 2 is possible only if $f_{-5}(r)=0$. Repeating the same argument,  we show that $ f_{-1 + 4k}(r)=0, \ \forall k\leq-1.$
	
	Next, we shall prove that $ f_{-3}(r)= 0=f_{1}(r)$, and consequently $f_{-3+4k}(r)=0$ for all $k\leq -1$.
	In $(\ref{sum})$, the terms in $ z^{n+3} $ come from the product of $ T_{e^{i\theta}f_1} $ with $ T_{z^2} $ only, hence $ f_1 (r)= c_1r$ for some constant $c_1$. Now, the terms in $ z^{n-1}  $ come from the product of $ T_{{e^-3i\theta}f_{-3}} $ with $ T_{z^2} $ and the product of $T_{e^{i\theta}f_1}=c_1T_z$ with $T_{\bar{z}^2}$. Therefore we must have
		$$ c_1 T_z T_{\bar{z}^2} (z^n) + T_{e^{-3i\theta}f_{-3}} T_{z^2}(z^n) = c_1  T_{\bar{z}^2}T_z (z^n) + T_{z^2}T_{e^{-3i\theta}f_{-3}}(z^n) \ \forall n\geq 0,$$
		which, using Lemma \ref{Mellin}, implies
	$$ c_1 \frac{n-1}{n+1} z^{n-1} + 2n \widehat{f_{-3}}(2n+3) z^{n-1} = c_1 \frac{n}{n+2} z^{n-1} + 2(n-2) \widehat{f_{-3}}(2n-1) z^{n-1},\ \forall n\geq 3. $$
	It follows that,
	$$ 2n \widehat{f_{-3}}(2n+3) - 2(n-2) \widehat{f_{-3}}(2n-1) = c_1\Big[\frac{n}{n+2} - \frac{n-1}{n+1} \Big], \forall n\geq 3. $$
	Applying Lemma \ref{complex}, the previous equation becomes
	 $$ (z+4)\widehat{f_{-3}}(z+7) - z \widehat{f_{-3}}(z+3) = c_1\Big[\frac{z+4}{z+8} - \frac{z+2}{z+6} \Big], \textrm{ for } \Re z\geq 2, $$
	 or
	 \begin{equation}  \label{eq7}
	 (z+4)\widehat{r^3f_{-3}}(z+4) - z \widehat{r^3f_{-3}}(z) = -4c_1\Big[\frac{1}{z+8} - \frac{1}{z+6} \Big], \textrm{ for } \Re z\geq 2.
	\end{equation}
	Here we let $\displaystyle{f_* (r) = -4c_1 r^6 \frac{1-r^2}{1-r^4}}$. Then
	\begin{eqnarray*}
		\widehat{f_*}(z+4)-\widehat{f_*}(z)&=&\widehat{r^4f_*}(z)-\widehat{f_*}(z)\\
		&=&\widehat{(r^4-1)f_*}(z)\\
		&=& 4c_1\int_{0}^{1} \frac{(1-r^4)r^6(1-r^2)}{(1-r^4)} r^{z-1}dr\\
		&=& -4c_1 \Big[\frac{1}{z+8}- \frac{1}{z+6}\Big].
	\end{eqnarray*}
	Thus, Equation (\ref{eq7}) can be written as
	$$ F(z+4)-F(z)= \widehat{f_*}(z+4)- \widehat{f_*}(z)$$
	where $ F(z)=z \widehat{r^3f_{-3}}(z) $.
	So using Lemma 3, we obtain
	$$ z \widehat{r^3f_{-3}}(z) = c_{-3} + \widehat{f_*}(z), \textrm{ for some constant } c_{-3}. $$
	Now, the Mellin convolution property $(2)$ implies
	$$ \widehat{r^3f_{-3}}(z) = c_{-3}\widehat{1}(z) + \widehat{1}(z)\widehat{f_*}(z) = c_{-3}\widehat{1}(z) + \widehat{(1 *_M f_*)}(z), $$
	with
	\begin{eqnarray*}
	\widehat{(1 *_M f_*)}(z) &=& \int_{r}^{1} 1(\frac{r}{t}) f_*(t)\frac{dt}{t} \\ &=& -4c_1\int_{r}^{1}\frac{t^6(1-t^2)}{1-t^4}\frac{dt}{t} \\ &=& -2c_1\Big[\Big(\ln 2 -\frac{1}{2}\Big) + \Big(3r^2-\frac{r^4}{2}-\ln (1+r^2)\Big)\Big].
	\end{eqnarray*}
	Hence, 	$$ \widehat{r^3f_{-3}}(z) = c_{-3}\widehat{1}(z) -2c_1\Big[\Big(\ln 2 -\frac{1}{2}\Big) + \Big(3r^2-\frac{r^4}{2}-\ln (1+r^2)\Big)\Big], $$
	or
	$$ f_{-3} (r) = \frac{c_{-3}}{r^3}- c_1\Big[\Big(\frac{2\ln 2 -1}{r^3}\Big) + \Big(\frac{6}{r}-r-\frac{\ln (1+r^2)}{r^3}\Big)\Big]. $$ But clearly $f_{-3}$ is not in $ L^1([0,1],rdr) $ unless $ c_{-3}=0 $ and $ c_{1}=0 $, and therefore $ f_{-3}(r)=0 $ and $ f_1 (r)= 0 $. Now, in $(\ref{sum})$, the terms in $z^{n-5}$ come from the product of $T_{e^{-7i\theta}f_{-7}}$ with $T_{z^2}$ only because $T_{e^{-3i\theta}f_{-3}}=0$. Thus $T_{e^{-7i\theta}f_{-7}}$ must commute with $T_{z^2}$, and hence by Theorem 2 we have that $f_{-7}(r)=0$. Similarly, we prove that $f_{-3+4k}(r)=0$ for all $k\leq -2$.
	
	Now going back to Equation $(\ref{sum})$, the terms in $ z^{n+4} $ come only from the product of $ T_{e^{i2\theta}f_2} $ with $ T_{z^2} $, and so these operators must commute. Thus, Theorem 2 implies that $ f_2 (r)= c_2 r^2$ for some constant $c_2$, i.e., $T_{e^{2i\theta}f_2}=c_2T_{z^2}$.  Similarly, the terms in $ z^n $ come from the products of $ T_{e^{-2i\theta}f_{-2}} $ with $ T_{z^2} $ and from the product of $ T_{\bar{z}^2} $ with $ T_{e^{i2\theta}f_2} = c_2T_{z^2}$, and hence by equality we have
	$$ c_2 T_{z^2} T_{z^{-2}}(z^n) + T_{e^{-2i\theta}f_{-2}}T_{z^2}(z^n) = c_2  T_{\bar{z}^2}T_{z^2}(z^n) + T_{z^2}T_{e^{-2i\theta}f_{-2}} (z^n), \forall n\geq 0.$$
	Thus, Lemma \ref{Mellin} implies
	$$ c_2 \frac{n-1}{n+1}  + 2(n+1)\widehat{f_{-2}}(2n+4) = c_2 \frac{n+1}{n+3} + 2(n-1)\widehat{f_{-2}}(2n), \forall n\geq 2. $$
	Applying Lemma \ref{complex} to the previous equation, we obtain
	$$ 2(z+1)\widehat{f_{-2}}(2z+4) -  2(z-1)\widehat{f_{-2}}(2z) =  c_2\Big[ \frac{z+1}{z+3} - \frac{z-1}{z+1} \Big], \textrm{ for } \Re z\geq 2, $$
	which can be rewritten as
	$$ F(z+2)-F(z) = G(z+2)-G(z), \textrm{ for } \Re z\geq 2 $$
	with $ F(z)= 2(z-1)\widehat{f_{-2}}(2z) $ and $ \displaystyle{G(z)= c_2\frac{z-1}{z+1}}$.
	So by Lemma \ref{periodic}, we have
	$$ F(z)= c_{-2} + G(z), \textrm{ for some constant } c_{-2}.$$
	Hence
	$$ 2(z-1)\widehat{f_{-2}}(2z) = c_{-2} + c_2\frac{z-1}{z+1}, $$
	or
	\begin{eqnarray*}
		\widehat{f_{-2}}(2z) &=& \frac{c_{-2}}{2(z-1)} + \frac{c_2}{2(z+1)} \\ &=& c_{-2} \widehat{r^{-2}}(2z) + c_2 \widehat{r^2}(2z),
	\end{eqnarray*}
and therefore
$$ f_{-2}(r)= c_{-2}r^{-2} + c_2 r^2. $$
But clearly in the expression of $f_{-2}$, the term $c_{-2}r^{-2}$ is not in $L^1 ([0,1], rdr) $ unless $ c_{-2}=0 $, and so in this case $ f_{-2}(r)= c_2 r^2 $, i.e., $T_{e^{-2i\theta}f_{-2}}=c_2T_{\bar{z}^2}$.

Finally, in $(\ref{sum})$ the terms in $ z^{n-4}$  come from the product of $ T_{e^{-6i\theta}f_{-6}} $ with $T_{z^2} $ and  the product of $T_{\bar{z}^2} $ with $T_{e^{-2i\theta}f_{-2}} = c_2 T_{\bar{z}^2}$. Thus  by equality, we must have
$$ c_2 T_{\bar{z}^2} T_{\bar{z}^2} (z^n) +  T_{e^{-6i\theta}f_{-6}}T_{z^2} (z^n) = c_2 T_{\bar{z}^2} T_{\bar{z}^2} (z^n) + T_{z^2} T_{e^{-6i\theta}f_{-6}} (z^n), \forall n\geq 0, $$
which is equivalent to
$$ T_{e^{-6i\theta}f_{-6}}T_{z^2} (z^n) = T_{z^2} T_{e^{-6i\theta}f_{-6}} (z^n), \forall n\geq 0. $$
The previous equation tells us that $ T_{e^{-6i\theta}f_{-6}} $ commutes with $T_{z^2} $, and by Theorem 2, this is possible only if $ f_{-6}(r)=0 $. Now in $(\ref{sum})$, the terms in $z^{n-8}$ come from the product of $T_{e^{-10i\theta}f_{-10}}$ with $T_{z^2}$ only because $T_{e^{-6i\theta}f_{-6}}=0$. So $T_{e^{-10i\theta}f_{-10}}$ must commute with $T_{z^2}$, and again Theorem 2 implies that $f_{-10}(r)=0$. Similarly, we prove that $ f_{-2+4k}(r)=0, \ \forall k\leq-2 $.
This completes the proof.
\end{proof}
\begin{rem}
\begin{itemize}	
\item[i)] It is easy to see through the proofs that our results remain true if the symbol $z^2+\bar{z}^2$ is replaced by any linear combination of $z^2$ and $\bar{z}^2$, i.e., $\alpha z^2+\beta\bar{z}^2$.
\item[ii)] If the polar decomposition the symbol $f$ is instead truncated below, i.e., $f(re^{i\theta})=\sum_{k=N}^{\infty}$ where $N$ is a negative integer, then the result remains true since one can pass to the adjoint.
\end{itemize}
\end{rem}

{\small
	}
	
\end{document}